\documentclass{amsart}
\usepackage[english]{babel}
\usepackage{graphicx}
\usepackage{amsmath}
\usepackage{amsthm}
\usepackage{amssymb}
\newtheorem{theorem}{Proposition}[section]
\newtheorem{theorem 2}[theorem]{Theorem}
\newtheorem{theorem 3}[theorem]{Theorem}
\newtheorem{theorem 4}[theorem]{Theorem}
\newtheorem{theorem 5}[theorem]{Lemma}
\newtheorem{theorem 6}[theorem]{Lemma}
\newtheorem{theorem 7}{Lemma}[section]
\newtheorem{corollary}[theorem]{Corollary}
\newtheorem{corollary 1}[theorem]{Corollary}

\begin{document}
\title{On the coefficients of divisors of $x^{n}-1$}
\address{Department of Mathematics, Indian Institute of Technology Roorkee,India 247667}
\author{Sai Teja Somu}
\date{October 21, 2015}
\maketitle
\begin{abstract}
 Let $a(r,n)$ be $r$th coefficient of $n$th cyclotomic polynomial. Suzuki proved that $\{a(r,n)|r\geq 1,n\geq 1\}=\mathbb{Z}$. If $m$ and $n$ are two natural numbers we prove an analogue of Suzuki's theorem for divisors of $x^n-1$ with exactly $m$ irreducible factors. We prove that for every finite sequence of integers $n_1,\ldots,n_r$ there exists a divisor $f(x)=\sum_{i=0}^{deg(f)}c_ix^i$ of $x^n-1$ for some $n\in \mathbb{N}$ such that $c_i=n_i$ for $1\leq i \leq r$. Let $H(r,n)$ denote the maximum absolute value of $r$th coefficient of divisors of $x^n-1$.
 In the last section of the paper we give tight bounds for $H(r,n)$.
\end{abstract}
\section{Introduction}
The $n$th cyclotomic polynomial($\phi_n(x)$) is given by
\begin{equation}\label{equation}
\phi_n(x)=\prod\limits_{d|n}(x^d-1)^{\mu(\frac{n}{d})},
\end{equation}
where $\mu(n)$ is the Mobius function.
 $\phi_n(x)$ is an irreducible polynomial of degree $\phi(n)$. $x^n-1$ can be factored in the following way
\begin{equation}
x^n-1=\prod\limits_{d|n}\phi_d(x).
\end{equation}
Let $A(n)$ denote the largest coefficient of $\phi_n(x)$.  Bateman in \cite{B} proved the following inequality 
\begin{equation}
A(n)\leq n^{2^{k-1}},
\end{equation}
where $k$ is the number of distinct odd prime factors. $A(n)$ has been investigated in papers \cite{C},\cite{D} and \cite{E}. For a polynomial $f\in \mathbb{Z}[x]$, let $H(f)$ denote the absolute value of largest coefficient of $f$. Pomerance and Ryan in \cite{F} introduced the function
\begin{equation}
B(n):=max\{H(f) : f|(x^n-1)\}.
\end{equation}
They obtained a tight estimate for $B(n)$ and proved that 
\begin{equation}
\limsup_{n\to\infty}\frac{\log \log B(n)}{\log n/\log \log n}=\log 3.
\end{equation}
Let $d(n)$ denote number of divisors of $n$.
Thompson in Theorem 1.2 of \cite{Z} has proven that for any function $\psi(n)$ defined on natural numbers such that $\psi(n)\rightarrow \infty$ as $n\rightarrow  \infty$ we have
\begin{equation}
B(n)\leq n^{d(n)\psi(n)}
\end{equation}
for a set of natural numbers of density $1$. 

The function $B(n)$ has been the subject of  the papers \cite{G} and \cite{H}.
If $\phi_n(x)=\sum\limits_{r=0}^{\phi(n)}a(r,n)x^r$, Suzuki in \cite{A} proves that $\{a(r,n)|r\geq 1,n\geq 1\}=\mathbb{Z}$.
We prove an analogue for divisors of $x^n-1$ in the second section.
In the third section we prove that for every  finite sequence of integers
$n_1,\ldots,n_r$ there exists a divisor $f(x)=\sum_{i=0}^{deg(f)}c_ix^i$ of $x^n-1$ for some $n\in \mathbb{N}$ such that $c_i=n_i$ for $1\leq i \leq r$.
 
 For $f\in \mathbb{Z}[x]$, let $(f)_r$ denote the $r$th coefficient of $f$.
 Then $H(r,n)$ is defined in the following way
 \begin{equation}
 H(r,n)=max\{|(g)_r|: g|(x^n-1)\}.
 \end{equation}
 In the last section of the paper we give an upper bound for $H(r,n)$ in terms of number of divisors of $n$ and then we show that the upper bound cannot be improved significantly.
   
\section{An analogue of Suzuki's Theorem}
   
 If $m$ and $n$ are two natural numbers then there exists a divisor $f(x)$ of $x^l-1$ for some $l\in \mathbb{N}$ with exactly $m$ irreducible factors such that $(f)_r=n$ and $(f)_s=-n$ for some $r,s\in \mathbb{N}$. We prove the following proposition in this section.
   
\begin{theorem} \label{Theorem 1}
Let $n$ and $m$ be two natural numbers then there exists a divisor $f(x)=\sum\limits_{i=0}^{deg(f)}a_ix^i$ of $x^{l}-1$ for some $l\in \mathbb{N}$ with exactly $m$ irreducible divisors 
 such that $\{-n,\cdots,0,\cdots,n\}\subset\{a_1,\cdots,a_{deg(f)}\}$.
\end{theorem}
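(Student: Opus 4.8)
The plan is to reduce to the case $m=1$ and then ``pad'' with harmless extra cyclotomic factors. Suppose for the moment that, for the given $n$, we have produced a single cyclotomic polynomial $\phi_{l_0}(x)=\sum_{i\ge 0}c_ix^i$ of degree $D$ whose coefficients in positions $1,\dots,D$ already contain $\{-n,\dots,n\}$. For an arbitrary $m\ge 1$ I would then choose primes $s_1<\dots<s_{m-1}$, none dividing $l_0$, with $s_1>D$ and recursively $s_{j+1}>\deg\!\big(\phi_{l_0}\prod_{i\le j}\phi_{s_i^2}\big)$, and set
\[
f(x)=\phi_{l_0}(x)\prod_{i=1}^{m-1}\phi_{s_i^2}(x).
\]
Because $\phi_{s^2}(x)=\phi_s(x^s)=1+x^s+x^{2s}+\dots+x^{(s-1)s}$, multiplying by $\phi_{s^2}$ leaves every coefficient of degree $<s$ unchanged; by the size conditions on the $s_i$, the coefficients of $f$ in degrees $\le D$ coincide with those of $\phi_{l_0}$, so $\{-n,\dots,n\}\subseteq\{a_1,\dots,a_D\}$. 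Moreover $\phi_{l_0},\phi_{s_1^2},\dots,\phi_{s_{m-1}^2}$ are pairwise distinct irreducible polynomials, so $f$ has exactly $m$ irreducible divisors, and $f\mid x^{L}-1$ with $L=l_0\prod_i s_i^2$. This reduces everything to the case $m=1$.

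For the base case I would use a cyclotomic polynomial of the shape $\phi_{2p_1p_2\cdots p_t}$, in the spirit of the classical Schur--Lehmer construction of cyclotomic polynomials with large coefficients. Let $t$ be the least odd integer with $t\ge n+2$, and pick a prime $p_1$ large enough that $[p_1,2p_1)$ contains at least $t$ primes (possible since $\pi(2N)-\pi(N)\to\infty$); let $p_1<p_2<\dots<p_t<2p_1$ be such primes, put $m_0=p_1\cdots p_t$ and $l_0=2m_0$. From $\phi_{m_0}(x)=\prod_{S\subseteq\{1,\dots,t\}}\bigl(1-x^{\prod_{i\in S}p_i}\bigr)^{(-1)^{t-|S|}}$, and the fact that for $|S|\ge 2$ the exponent $\prod_{i\in S}p_i\ge p_1p_2>2p_1>p_t$, one obtains
\[
\phi_{m_0}(x)\equiv(1-x)^{-1}\prod_{i=1}^{t}(1-x^{p_i})\equiv\Bigl(\sum_{k\ge 0}x^{k}\Bigr)\Bigl(1-\sum_{i=1}^{t}x^{p_i}\Bigr)\pmod{x^{p_t+1}},
\]
so the coefficient of $x^j$ in $\phi_{m_0}$ equals $1-\#\{i:p_i\le j\}$ for $0\le j\le p_t$. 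As $j$ runs over $[0,p_t]$ this takes every value in $\{-(t-2),\dots,0,1\}$, and each value $1-i$ (for $0\le i\le t-1$) is attained throughout the interval $[p_i,p_{i+1}-1]$ (with $p_0:=0$), which has length $\ge 2$ and hence contains integers of both parities. Finally $\phi_{l_0}(x)=\phi_{2m_0}(x)=\phi_{m_0}(-x)$ since $m_0$ is odd and $>1$, so the coefficient of $x^j$ in $\phi_{l_0}$ is $(-1)^j\bigl(1-\#\{i:p_i\le j\}\bigr)$; taking the values above with both signs shows the coefficients of $\phi_{l_0}$ in positions $1,\dots,p_t$ contain all of $\{-(t-2),\dots,t-2\}\supseteq\{-n,\dots,n\}$, as required.

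The main obstacle is the base case: forcing a single cyclotomic polynomial to have coefficients spanning the whole interval $[-n,n]$. The inclusion--exclusion computation only gives the one-sided range $\{-(t-2),\dots,0,1\}$ for $\phi_{m_0}$, and the decisive trick is to pass to $\phi_{2m_0}(x)=\phi_{m_0}(-x)$, whose coefficients are those of $\phi_{m_0}$ with alternating signs, thereby symmetrising the range to $[-(t-2),t-2]$; one must also check that each required value occurs at a position $\ge 1$, which is why it is convenient that every coefficient value is attained on an interval of length $\ge 2$. Once this is in hand, the reduction to $m=1$ and the padding by the $\phi_{s^2}$'s are routine, the only point being that $\phi_{s^2}(x)=1+x^s+\dots+x^{(s-1)s}$ acts as the identity on the coefficients of small degree.
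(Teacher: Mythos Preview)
Your proof is correct. The base case $m=1$ is essentially identical to the paper's: both take $t$ odd primes $p_1<\dots<p_t$ with $p_1+p_2>p_t$ (equivalently, all lying in an interval $[p_1,2p_1)$), compute $\phi_{p_1\cdots p_t}(x)\bmod x^{p_t+1}$ via inclusion--exclusion, and then pass to $\phi_{2p_1\cdots p_t}(x)=\phi_{p_1\cdots p_t}(-x)$ to symmetrise the range of coefficients. Your parity discussion (that each value $1-i$ is attained on an interval of length $\ge 2$, hence at positions of both parities, and in particular at some position $\ge 1$) is more explicit than the paper's, which simply asserts the inclusion.

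The reduction from general $m$ to $m=1$, however, is genuinely different. The paper treats $m=2$ by the identity $\phi_{2Np'}(x)\phi_{p'}(x)=\phi_{2N}(x^{p'})$, and then handles $m=2k+1$ and $m=2k$ separately by appending $k$ (respectively $k-1$) pairs $\phi_{n_i}(x)\phi_{n'_i}(x)$, where $n_i$ has two large prime factors and $n'_i$ is a large prime, so that modulo a high power of $x$ each pair contributes $(1-x)(1-x)^{-1}=1$. Your padding by $\phi_{s_i^2}(x)=1+x^{s_i}+\dots+x^{(s_i-1)s_i}$ is cleaner: it works uniformly for every $m\ge 1$ without any parity split or the special $m=2$ trick, and each extra factor is visibly $\equiv 1\pmod{x^{s_i}}$, so no cancellation argument is needed. (Incidentally, the recursive size condition on the $s_i$ is more than necessary; simply choosing distinct primes $s_1,\dots,s_{m-1}>D$ coprime to $l_0$ already gives $\prod_i\phi_{s_i^2}(x)\equiv 1\pmod{x^{D+1}}$.) The paper's pairing idea, on the other hand, foreshadows the construction used later in Lemma~3.1, so it fits the overall narrative of the paper, but as a stand-alone argument your reduction is more economical.
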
  

We follow the proof of Suzuki \cite{A}.
We require the following lemmas for proving Theorem \ref{Theorem 1}.
\begin{theorem 5}\label{lemma 1}
Let $t$ be any integer greater than 2. Then there exist $t$ distinct primes $p_1<p_2<\cdots<p_t$ such that $p_1+p_2>p_t$.
\end{theorem 5}
\begin{proof}
Let $\pi(n)$ be prime counting function.
From the prime number therem one can see that $\lim_{n\to\infty}(\pi(2n)-\pi(n))=\infty.$
Hence for a large $n$ we can choose $t$ distinct primes between $(n,2n]$ and the $t$ primes picked from the set $(n,2n]$ clearly satisfy the hypothesis.
\end{proof}
\begin{theorem 6}\label{Lemma 2}
If $p$ is a prime.
\newline
If $p|n$ then $\phi_{np}(x)=\phi_n(x^p)$.
\newline
If $p\nmid n$ then $\phi_{np}(x)=\frac{\phi_n(x^p)}{\phi_n(x)}$.
\newline
If $n>1$ is odd then $\phi_{2n}(x)=\phi_n(-x)$.
\end{theorem 6}
\begin{proof}
All the three statements can be proved from (\ref{equation}).
\end{proof}
Proof of Proposition \ref{Theorem 1}
\begin{proof}
Choose an odd $t$ strictly greater than $n+1$ and from Lemma \ref{lemma 1} there exist $t$ primes $p_1<\cdots<p_t$ such that $p_1+p_2>p_t$. Let $p=p_t$ and $N=\prod\limits_{i=1}^{t}p_i$.

From (1), we have
\begin{align*}
\phi_{N}(x)&=\prod_{d|N}(1-x^d)^{\mu(\frac{N}{d})}\\
&\equiv (1-x)^{-1}\prod_{i=1}^{t}(1-x^{p_i})(\mod x^{p+1})\\
&\equiv (1+x+\cdots x^{p})(1-x^{p_1}\cdots-x^{p_t})(\mod x^{p+1})\\
&\equiv \sum_{i=0}^{p}c_ix^i (\mod x^{p+1}),
\end{align*}
where $c_i=\left\{\begin{matrix}
 & 1  & 0 \leq i <p_1   \\ 
 & 1-k  & p_k\leq i<p_{k+1}\\ 
 & 1-t &  i=p.
\end{matrix}\right.$\newline
\newline
From Lemma \ref{Lemma 2}, we have
\begin{align*}
\phi_{2N}(x)&=\phi_{N}(-x)\\
&\equiv \sum_{i=0}^{p}c_i(-1)^ix^i (\mod x^{p+1}).
\end{align*}
We can see that 
$\{-n,\cdots,0,\cdots,n\}\subset\{-c_1,\cdots,c_p(-1)^p\}.$
Hence Proposition \ref{Theorem 1} is true for $m=1$.
Choose a prime $p'$ greater than $p$, from Lemma \ref{Lemma 2}, we have
 \begin{align*}
 \phi_{2Np'}(x)\phi_{p'}(x)=\phi_{2N}(x^{p'}).
 \end{align*}
 Hence the set $\{-n,\cdots,n\}$ is a subset of set of coefficients of $\phi_{2Np'}(x)\phi_{p'}(x)$.
 Hence the proposition is true for $m=2$.
Choose square free numbers $n_1<n_2<\cdots<n_k$ and $n'_1<\cdots<n'_k$ such that each $n_i$ has exactly two distinct prime factors and each $n'_i$ are prime and every prime factor of $n_i$ and $n'_i$ is greater than $2Np'$.
From (1), one can see that
 \begin{align*}
 \phi_{2N}(x)\prod\limits_{i=1}^{k}\phi_{n_i}(x)\prod\limits_{i=1}^{k}\phi_{n'_i}(x)&\equiv \phi_{2N}(x)\prod_{i=1}^{k}(1-x)\prod_{i=1}^{k}(1-x)^{-1}\\
 &\equiv \phi_{2N}(x)(\mod x^{2N}).
 \end{align*} 
 Hence the proposition is true for $m=2k+1$.
 For $m=2k$ consider the product
\begin{equation*}
\phi_{2Np'}(x)\phi_{p'}(x)\prod\limits_{i=1}^{k-1}\phi_{n_i}(x)\prod\limits_{i=1}^{k-1}\phi_{n'_i}(x)\equiv \phi_{2Np'}(x)\phi_{p'}(x)(\mod x^{2Np'}).
\end{equation*}
Hence the proposition is true for $m=2k$ for $k\geq 2$ which completes the proof of the proposition.
\end{proof}
For every natural number $r$ the set $\{|a(r,n)| : n\in\mathbb{N}\}$ remains bounded and let $C(r)$ denote the maximum value of the set. $C(r)$ has been studied by Erd\H{o}s in \cite{I}. Bachman in \cite{J} has obtained the following asymptotic formula
\begin{equation}
\log C(r) = C_o\frac{\sqrt{r}}{(\log r)^\frac{1}{4}}\left(1+O\left(\frac{\log\log r}{\sqrt{\log r}}\right)\right).
\end{equation}
However, this is not true for $r$th coefficients of divisors of $x^n-1$. The $r$th coefficent of divisors of $x^n-1$ are unbounded. In fact more can be said about the first $r$ coefficients of divisors of $x^n-1$.
\begin{theorem 2}\label{theroem 2}
For a given finite sequence of integers $\{n_i\}_{i=1}^{r}$  there exists a divisor $f(x)=\sum\limits_{i=0}^{deg(f)}a_ix^i$ of $x^{l}-1$ for some $l$ such that $a_i=n_i$ for $1\leq i \leq r$.
\end{theorem 2} 
\section{Proof of Theorem \ref{theroem 2}}

The following Lemma is needed for proving Theorem \ref{theroem 2}.

\begin{theorem 7}
For every $n\in \mathbb{N}$, there exist two sequences of polynomials $\{d_m^{(n)}(x)\}_{m=1}^{\infty}$ and $\{d_m^{'(n)}(x)\}_{m=1}^{\infty}$ where $d_m^{(n)}(x)$ and $d_m^{'(n)}(x)$ are divisors of $x^{l_m}-1$ and $x^{l'_m}-1$ respectively for some $l_m,l'_m\in\mathbb{N}$ and have the following properties.
\newline
 (1)$gcd(d_{m_{1}}^{(n)},d_{m_{2}}^{(n)})=1$ for $m_1\neq m_2$.
\newline
(2)$gcd(d_{m_{1}}^{'(n)},d_{m_{2}}^{'(n)})=1$ for $m_1\neq m_2$.\newline
(3)$gcd(d_{m_{1}}^{(n)},d_{m_{2}}^{'(n)})=1$ for all $m_1$ and $m_2$.\newline
(4)$d_m^{(n)}(x)\equiv 1-x^n (\mod x^{n+1})$.\newline
(5)$d_m^{'(n)}(x)\equiv 1+x^n (\mod x^{n+1})$.
\end{theorem 7}
\begin{proof}
From (2)
\begin{equation*}
1-x^n=-\prod_{d|n}\phi_d(x).
\end{equation*}
Let $\{n_i\}_{i=1}^{\infty}$ be a strictly increasing sequence of all square free numbers with exactly two prime factors such that each of the prime divisor is strictly greater than $n$.
From (1), if $1<d\leq n$ then
\begin{equation*}
\phi_{n_id}(x)\equiv \phi_{d}(x)  (\mod x^{n+1}).
\end{equation*}
If $d=1$ then $\phi_{n_id}(x)\equiv -\phi_{1}(x) (\mod x^{n+1})$.
Hence for every $m$
\begin{equation*}
\prod_{d|n}\phi_{n_md}(x)\equiv 1-x^{n} (\mod x^{n+1}).
\end{equation*}
From (2), we have
\begin{equation*}
\frac{(x^{2n}-1)}{x^n-1}=1+x^n= \prod_{\substack {d|2n\\d\nmid n}}\phi_d(x).
\end{equation*}
Thus, we have  
\begin{equation*}
\prod_{\substack{d|2n\\d\nmid n}}\phi_{n_md}(x)\equiv 1+x^{n} (\mod x^{n+1}).
\end{equation*}
Define 
\begin{equation*}
d_m^{(n)}(x):=\prod_{d|n}\phi_{n_md}(x)
\end{equation*}
and \begin{equation*}
d_m^{'(n)}(x):=\prod_{\substack{d|2n\\d\nmid n}}\phi_{n_md}(x).
\end{equation*}
Clearly, $\{d_m^{(n)}(x)\}_{m=1}^{\infty}$ and $\{d_m^{'(n)}(x)\}_{m=1}^{\infty}$ satisfy properties (1) to (5).
\end{proof}
Proof of Theorem \ref{theroem 2}
\begin{proof}
The proof is by induction on $r$.
For $r=1$ and for some $n_1\in \mathbb{Z}$.
If $n_1=0$, let 
$f(x)=d_1^{(2)}(x)\equiv 1+n_1 x (\mod x^2),$ if $n_1>0$ let
$f(x)=\prod\limits_{i=1}^{n_1}d_i^{'(1)}(x)\equiv 1+n_1x(\mod x^2)$ and if $n_1<0$ let
$f(x)=\prod\limits_{i=1}^{-n_1}d_i^{(1)}(x)\equiv 1+n_1x(\mod x^2)$
. Hence the theorem is true for $r=1$.
\newline
Let us assume the theorem is true for $r=k$.
For a sequence $\{n_i\}_{i=1}^{r+1}$ from our assumption there exists a divisor $f'(x)=\sum\limits_{i=0}^{deg(f')}a_ix^i$ of $x^{l'}-1$ for some $l'$ such that $a_i=n_i$ for $1\leq i \leq r$.
\newline
If $a_{r+1}=n_{r+1}$ set $f(x)=f'(x)$.
\newline
If $a_{r+1}>n_{r+1}$ set
\begin{equation*}
f(x)=f'(x)\prod\limits_{i=1}^{a_{r+1}-n_{r+1}}d_{n_{j_i}}^{(r+1)}(x)\equiv a_0+\sum\limits_{i=1}^{r+1}n_ix^i (\mod x^{r+2}),
\end{equation*}
where $d_{n_{j_i}}(x)$ are chosen such that they are relatively prime to $f'(x)$.
\newline
If $a_{r+1}<n_{r+1}$ set
\begin{equation*}
f(x)=f'(x)\prod\limits_{i=1}^{-a_{r+1}+n_{r+1}}d_{n_i}^{'(r+1)}(x)\equiv 1+\sum\limits_{i=1}^{r+1}n_ix^i (\mod x^{r+2}),
\end{equation*}
where $d'_{n_{j_i}}(x)$ are chosen such that they are relatively prime to $f'(x)$. Since the divisors are relatively prime to $f'(x)$, $f(x)$ is a divisor of $x^l-1$ for some $l$.
Hence the theorem is true for $r=k+1$ which completes the proof of the theorem.
\end{proof}
\section{Upper and Lower bounds on $H(r,n)$}
We give an upper bound for $H(r,n)$ in terms of number of divisors of $n$.
\begin{theorem 3}\label{Theorem 3}
For a given natural number $n$ there exists a constant $c(r)$ only depending on $r$ such that
\begin{equation*}
H(r,n)\leq \frac{1}{2^rr!}d(n)^r+c(r)d(n)^{r-1},
\end{equation*}
where $d(n)$ is number of divisors of $n$.
\end{theorem 3}

\begin{proof}
Let $n$ be a natural number. From (2)
\begin{equation*}
x^n-1=\prod_{d|n}\phi_d(x).
\end{equation*}
Any divisor $f(x)\in \mathbb{Z}[x]$ will be of the form 
\begin{equation*}
f(x)=\prod_{m\in S}\phi_m(x),
\end{equation*}
where $S$ is a subset of set of all divisors of $n$.
From (1)
\begin{align*}
f(x)&=\prod_{m\in S}\prod_{d|m}(x^d-1)^{\mu(\frac{m}{d})}\\
&=\prod_{d|n}(x^d-1)^{s_1(d)-s_2(d)}\\
& \equiv \prod_{\substack {d|n \\ d\leq r} }(x^d-1)^{s_1(d)-s_2(d)} (\mod x^{r+1}), 
\end{align*}
where \begin{align*}
& s_1(d):=|\{m\in S : d|m,\mu(\frac{m}{d})=1\}|,\\
& s_2(d):=|\{m\in S : d|m,\mu(\frac{m}{d})=-1\}|.
\end{align*}
Since $|s_1(d)-s_2(d)|\leq \frac{d(n)}{2}$, the coefficients of $f(x) \mod x^{r+1}$ are dominated by 
\begin{align*}
g(x)&=(\prod_{i=1}^{r}(1-x^i)^{-\frac{1}{2}} )^{d(n)}\\
&\equiv (\sum\limits_{i=0}^{r}c_ix^i)^{d(n)}(\mod x^{r+1}),
\end{align*} 
where $c_0=1$ and $c_1=\frac{1}{2}$.
When $(\sum\limits_{i=0}^{r}c_ix^i)^{d(n)}$ is expanded using multinomial theorem, coefficient of $x^r$ will be 
\begin{align*}
((\sum\limits_{i=0}^{r}c_ix^i)^{d(n)})_r&=\sum_{
i_0+\cdots+i_r=d(n)\atop i_1+2i_2+\cdots+ri_r=r}\frac{d(n)!c_1^{i_1}c_2^{i_2}\cdots c_r^{i_r}}{(d(n)-i_1-i_2-\cdots-i_r)!i_1!\cdots i_r!}\\
&=\frac{1}{2^rr!}d(n)^r+O(d(n)^{r-1}).
\end{align*}
Hence 
\begin{equation*}
(f(x))_r\leq\frac{1}{2^rr!}d(n)^r+c(r)d(n)^{r-1},
\end{equation*}
for some constant $c(r)>0$.
Thus,
$H(r,n)\leq \frac{1}{2^rr!}d(n)^r+c(r)d(n)^{r-1}$
for all $n\in \mathbb{N}$.
\end{proof}
 An immediate consequence of this theorem is the following corollary.
\begin{corollary}
 \begin{equation*}
 H(r,n)\leq (1+o(1))n^{\frac{r(\log 2+o(1)}{\log \log n})}.
 \end{equation*} 
 \end{corollary}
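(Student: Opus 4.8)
The plan is to read the corollary off from Theorem~\ref{Theorem 3} combined with the classical bound on the maximal order of the divisor function. The only external input is Wigert's theorem, $\limsup_{n\to\infty}\frac{\log d(n)\,\log\log n}{\log n}=\log 2$; equivalently, for every $\varepsilon>0$ one has $d(n)\le 2^{(1+\varepsilon)\log n/\log\log n}$ for all large $n$, and in multiplicative shorthand $d(n)\le n^{(\log 2+o(1))/\log\log n}$, where $o(1)$ denotes a function of $n$ tending to $0$. This is classical (it is due to Wigert; see any standard analytic number theory text), so I do not anticipate a genuine difficulty --- the arithmetic content lies entirely in that bound, and what remains is bookkeeping of error terms.

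First I would note that, $r$ being fixed and $d(n)\ge 1$, Theorem~\ref{Theorem 3} gives $H(r,n)\le C(r)\,d(n)^r$ with $C(r)=\frac{1}{2^rr!}+c(r)$. Next I would raise the divisor bound to the $r$th power, $d(n)^r\le n^{\,r(\log 2+o(1))/\log\log n}$, and absorb the constant $C(r)$ into the exponent: since $C(r)$ does not depend on $n$ we may write $C(r)=n^{(\log C(r))/\log n}$, and $\frac{\log C(r)}{\log n}\cdot\log\log n\to 0$, so multiplying by $C(r)$ only perturbs the $o(1)$ inside $r(\log 2+o(1))$ by a quantity tending to $0$. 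This yields $H(r,n)\le n^{\,r(\log 2+o(1))/\log\log n}$, and a fortiori $H(r,n)\le (1+o(1))\,n^{\,r(\log 2+o(1))/\log\log n}$, which is the corollary. If one prefers to keep the lower-order term $c(r)d(n)^{r-1}$ from Theorem~\ref{Theorem 3} explicit, the same steps instead produce a prefactor $1+O(1/d(n))$; this is $1+o(1)$ whenever $d(n)\to\infty$, and for $n$ with $d(n)$ bounded (e.g.\ $n$ prime) the stated inequality is trivial, since its right-hand side is of the form $n^{c/\log\log n}$ with $c$ bounded away from $0$ and hence tends to infinity, while $H(r,n)$ stays bounded.

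The only step demanding care is this handling of the two occurrences of $o(1)$, and in particular the observation that $\frac{\log\log n}{\log n}\to 0$ makes the absorption of the constant $C(r)$ into the numerator legitimate; beyond that I foresee no obstacle. If a statement free of nested $o(1)$'s is wanted, the same computation gives the equivalent clean form $\limsup_{n\to\infty}\frac{\log H(r,n)\,\log\log n}{\log n}\le r\log 2$, which is immediate from $H(r,n)\le C(r)\,d(n)^r$ and Wigert's theorem.
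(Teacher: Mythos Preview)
Your argument is correct and follows the same route as the paper: combine Theorem~\ref{Theorem 3} with the maximal-order bound $d(n)\le n^{(\log 2+o(1))/\log\log n}$ (which the paper attributes to Ramanujan rather than Wigert, but it is the same classical result). Your handling of the constants and the two $o(1)$ terms is more explicit than the paper's one-line deduction, but the content is identical.
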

 \begin{proof}
 This follows from the theorem of Ramanujan \cite{K} that
 \begin{equation*}
 d(n)\leq n^{\frac{(\log 2+o(1)}{\log \log n})}.
 \end{equation*}
 \end{proof}
 
 Now we show that the inequality can be reversed for infinitely many $n$.
 \begin{theorem 4}\label{Theorem 4}
 There exists a sequence of natural numbers $\{n_k\}_{k=1}^{\infty}$  such that 
 \begin{equation*}
 H(r,n_k)\geq \frac{1}{2^rr!}d(n_k)^r-c_1(r)d(n_k)^{r-1}
 \end{equation*}
 and $d(n_k)\to \infty$ as $k\to \infty$ where $c_1(r)$ is a constant only depending on $r$.
 \end{theorem 4}
 \begin{proof}
 Let $n_k$ be the product of first $k$ primes with $k\geq r$.
 Let \begin{align*}
 f_k(x)&=\prod_{\substack{m|n_k \\ \mu(m)=1}}\phi_m(x)\\
 &=\prod_{\substack{m|n_k\\\mu(m)=1}}\prod_{d|m}(x^d-1)^{\mu(\frac{m}{d})}\\
 &=\prod_{\substack{m|n_k\\\mu(m)=1}}\prod_{d|m}(x^d-1)^{\mu(d)}\\
 &=\prod_{d|n_k}\prod_{\substack{m\equiv 0 (\mod d)\\\mu(m)=1}}(x^d-1)^{\mu(d)}.
 \end{align*}
 Since $|\{m|n_k : m\equiv 0 (\mod d),\mu(m)=1\}|=2^{k-v(d)-1}$ for $d|n_k$ and $d\neq n_k$, 
 where $v(d)$ denotes number of distinct prime factors of $d$, we have
 \begin{align*}
 f_k(x)&\equiv \prod_{d\leq r}(x^d-1)^{\mu(d)2^{k-v(d)-1}} (\mod x^{r+1})\\
 &\equiv\left(\prod_{d\leq r}(1-x^d)^{\mu(d)2^{-v(d)-1}}\right)^{2^k} (\mod x^{r+1})\\
 &\equiv (\sum\limits_{i=0}^{r}c_ix^i)^{d(n_k)}(\mod x^{r+1}),
 \end{align*}
 where $c_0=1$ and $c_1=-\frac{1}{2}$.
 Expanding $f_k(x)$ using multinomial theorem we can arrive at
 \begin{align*}
 ((\sum\limits_{i=0}^{r}c_ix^i)^{d(n)})_r&=\sum_{
 i_0+\cdots+i_r=d(n_k)\atop i_1+2i_2+\cdots+ri_r=r}\frac{d(n_k)!c_1^{i_1}c_2^{i_2}\cdots c_r^{i_r}}{(d(n)-i_1-i_2-\cdots-i_r)!i_1!\cdots i_r!}\\
 &= \frac{(-1)^r}{2^rr!}d(n_k)^r+O(d(n_k)^{r-1}).
 \end{align*}
 Therefore there exists a constant $c_1(r)$ independent of $k$ such that.
 \begin{equation*}
 |(f_k(x))_r|\geq \frac{1}{2^rr!}d(n_k)^r-c_1(r)d(n_k)^{r-1}.
 \end{equation*}
 Thus,
 \begin{equation*}
 H(r,n_k)\geq \frac{1}{2^rr!}d(n_k)^r-c_1(r)d(n_k)^{r-1}.
 \end{equation*}
 \end{proof}
 Since $n_k$ is product of first $k$ primes. We have $d(n_k)=n_k^{\frac{\log 2+\epsilon_k}{\log \log n_k}}$ for some $\epsilon_k$ and $\epsilon_k\rightarrow 0$ as $k\rightarrow \infty$. We have the following corollary. 
 \begin{corollary 1}
For every $r$, there exists a sequence $\{n_k\}_{k=1}^{\infty}$ such that
 \begin{equation*}
 H(r,n_k)\geq (1+\epsilon_k')n_k^{\frac{r(\log 2+\epsilon_k)}{\log \log n_k}},
 \end{equation*}
  as $k\rightarrow \infty$, $\epsilon_k\rightarrow 0$ and $\epsilon_k'\rightarrow 0$.
 \end{corollary 1}
 \bibliographystyle{amsplain}

\end{document}